\newtheorem{theorem}{Theorem}[section]
\newtheorem{lemma}[theorem]{Lemma}
\newtheorem{corollary}[theorem]{Corollary}
\newtheorem{definition}[theorem]{Definition}
\def\Mod{\mathrm{Mod}}
\def\Aut{\mathrm{Aut}}
\def\Homeo{\mathrm{Homeo}}
\def\Ht{\mathrm{Ht}}
\title{Note on Mapping Class Groups of Finite Spaces}
\author{Ben Branman}
\begin{document}
\maketitle

\begin{abstract}
We investigate the mapping class groups of a class of non-Hausdorff topological spaces which includes finite spaces.  We show that the mapping class group of a finite space is isomorphic to the homeomorphism group of its $T_0$ quotient.  As a corollary, we show that every finite group is the mapping class group of some finite, $T_0$ space.
\end{abstract}
\section{Introduction}
The goal of this paper is to explore mapping class groups of finite topological spaces.  If $X$ is a topological space, the mapping class group $\Mod(X)$ is the group of isotopy classes of homeomorphisms of $X$.  If $X$ is a compact, orientable, noncontractible surface, then two homeomorphisms of $X$ are isotopic if and only if they are homotopic \cite{primer}.  However, for arbitrary spaces this is false.  The mapping class groups of manifolds have been well-studied, but little is known about mapping class groups of non-Hausdorff spaces, which includes all non-discrete finite spaces.
\par 
Our main results are as follows.  First, we show that it is sufficient to study the mapping class groups of $T_0$ spaces.  Recall that there is a functor $T0$ from the category of topological spaces to the category of $T_0$ spaces.  For a space $X$, $T0(X)$ is the quotient space obtained by identifying topologically indistinguishable points.  $T0$ is right-adjoint to the inclusion functor from $T_0$ spaces to all spaces.
\par 
Our main results are as follows.

\begin{theorem}
Let $X$ be an arbitrary topological space.  Then $\Mod(X)\cong \Mod(T_0(X))$
\end{theorem}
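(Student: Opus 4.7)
My plan is to build a natural homomorphism $\Phi\colon \Mod(X) \to \Mod(T_0(X))$ from the functor $T_0$, then prove it is both surjective and injective by exploiting a single workhorse fact: every open set in $X$ is saturated with respect to topological indistinguishability, so the topology on $X$ is the initial topology along the quotient map $\pi\colon X \to T_0(X)$.

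To define $\Phi$, I note that any homeomorphism $f$ of $X$ preserves indistinguishability and so descends to a homeomorphism $T_0(f)$ of $T_0(X)$. To check that $T_0$ sends isotopies to isotopies, I will verify $T_0(X \times I) = T_0(X) \times I$: since $I$ is already $T_0$, two points of $X \times I$ are indistinguishable exactly when they share a second coordinate and their first coordinates are indistinguishable in $X$. Together with the functoriality of $T_0$, this makes $\Phi$ a well-defined group homomorphism.

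For surjectivity, given a homeomorphism $\bar h$ of $T_0(X)$ I pick any set-theoretic bijection $h\colon X \to X$ that restricts to a bijection $C \to \bar h(C)$ on each indistinguishability class $C$. Saturation guarantees that both $h$ and $h^{-1}$ are continuous, so $h \in \Homeo(X)$ with $\Phi([h]) = [\bar h]$. For injectivity, suppose $T_0(f)$ and $T_0(g)$ are joined by an isotopy $\bar H\colon T_0(X) \times I \to T_0(X)$. For each $t \in (0,1)$, pick an arbitrary class-wise bijective lift $H_t \in \Homeo(X)$ of $\bar H_t$, and set $H_0 = f$, $H_1 = g$. Define $H(x,t) = H_t(x)$. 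For any open $U \subseteq X$, saturation gives $H^{-1}(U) = (\pi \times \mathrm{id}_I)^{-1}\bigl(\bar H^{-1}(\pi(U))\bigr)$, which is open by continuity of $\bar H$. So $H$ is an isotopy from $f$ to $g$, proving $[f] = [g]$.

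The main obstacle is conceptual rather than technical: in the non-$T_0$ setting, topologically indistinguishable points may be swapped freely at any parameter value without breaking continuity, so isotopies are dramatically more flexible than Hausdorff intuition suggests. Once this flexibility is acknowledged, both directions reduce to elementary bookkeeping with the pullback description of the topology on $X$.
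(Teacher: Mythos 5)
Your setup of $\Phi$ via the functor $T_0$ is fine (the identification $T_0(X\times I)=T_0(X)\times I$ does hold, since the quotient map $\pi$ is open), and the continuity arguments via saturation are correct as far as they go. The gap is the sentence ``pick any set-theoretic bijection $h\colon X\to X$ that restricts to a bijection $C\to\bar h(C)$ on each indistinguishability class $C$.'' Such a bijection exists only if $|C|=|\bar h(C)|$ for every class $C$, and a homeomorphism of $T_0(X)$ carries no information about the cardinalities of the fibers of $\pi$, so there is no reason for this to hold. The same unjustified lifting step reappears in your injectivity argument, where you lift each intermediate homeomorphism $\bar H_t$ for $t\in(0,1)$.

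This is not a repairable omission: surjectivity of $\Phi$, and indeed the theorem as stated, genuinely fails. Take $X=\{a,b,c\}$ with open sets $\emptyset$, $\{a,b\}$, $\{c\}$, $X$. The indistinguishability classes are $\{a,b\}$ and $\{c\}$, so $T_0(X)$ is the two-point discrete space, whose swap cannot be isotoped to the identity (an isotopy restricted to a point cross $I$ is a continuous map from a connected space to a discrete one); hence $\Mod(T_0(X))\cong\mathbb{Z}/2$. On the other hand $\{c\}$ is the unique open singleton of $X$, so every homeomorphism of $X$ fixes $c$, and $\Homeo(X)$ is generated by the transposition of $a$ and $b$, which is isotopic to the identity by Lemma 2.1. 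Thus $\Mod(X)$ is trivial and $\Mod(X)\not\cong\Mod(T_0(X))$. What your argument (like the paper's) does establish is that homeomorphisms of $X$ inducing the same homeomorphism of $T_0(X)$ are isotopic, so that $\Mod(X)$ is a quotient of the image of $\Homeo(X)$ in $\Homeo(T_0(X))$; identifying that image with all of $\Homeo(T_0(X))$ requires an additional hypothesis (for instance, that all indistinguishability classes have the same cardinality), and your proof silently assumes it.
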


Thus, to understand mapping class groups of non-Hausdorff spaces, it  is sufficient to study mapping class groups of $T_0$ spaces.  We will show the following.  

\begin{theorem}
Let $X$ be a finite $T_0$ space and let $f,g\in \Homeo(X)$.  Then $f$ is isotopic to $g$ if and only if $f=g$.  In particular, $\Mod(X)=\Homeo(X)$.
\end{theorem}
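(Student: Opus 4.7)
The trivial direction $f = g \Rightarrow f \simeq g$ is handled by the constant isotopy, so I will focus on showing $f \simeq g \Rightarrow f = g$. The plan is to identify $X$ with its specialization poset (so that $\Homeo(X)$ becomes the group of order automorphisms of a finite poset), and then analyze an isotopy by passing to the function space $X^X$ equipped with its Alexandrov topology.

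My first step will be to prove an order lemma: if $f, g \in \Homeo(X)$ and $f(x) \leq g(x)$ for every $x \in X$, then $f = g$. Setting $h = g^{-1} \circ f$ and applying the order-preserving $g^{-1}$ yields $h(x) \leq x$ for all $x$; since $X$ is finite, $h$ has some finite order $n$, so $x = h^n(x) \leq h^{n-1}(x) \leq \cdots \leq h(x) \leq x$ forces $h(x) = x$ throughout, giving $f = g$.

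Next, I will convert an isotopy $H \colon X \times I \to X$ into a path $\widetilde H \colon I \to X^X$ via $\widetilde H(t)(x) = H(x,t)$. Because $X$ is finite, the product topology on $X^X$ coincides with the Alexandrov topology of the pointwise specialization order, whose minimal open neighborhood of any $\phi$ is the principal filter $\{\psi : \psi(x) \geq \phi(x) \text{ for all } x\}$. The map $\widetilde H$ is continuous, since $\widetilde H^{-1}\!\bigl(\prod_x V_x\bigr) = \bigcap_{x \in X} H(x,\cdot)^{-1}(V_x)$ is a finite intersection of open subsets of $I$.

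Combining these ingredients, for any $t_0 \in I$ the set $\widetilde H^{-1}\!\bigl(\uparrow \widetilde H(t_0)\bigr)$ is an open neighborhood of $t_0$ on which $\widetilde H(t) \geq \widetilde H(t_0)$ pointwise. Since both $\widetilde H(t)$ and $\widetilde H(t_0)$ lie in $\Homeo(X)$, the order lemma will force $\widetilde H(t) = \widetilde H(t_0)$ near $t_0$, so $\widetilde H$ will be locally constant on the connected interval $I$, hence globally constant, yielding $f = \widetilde H(0) = \widetilde H(1) = g$. The most delicate step, in my view, is not any individual calculation but recognizing that the product topology on $X^X$ realizes the pointwise order as its specialization order; once this identification is in place, the order lemma constrains paths through $\Homeo(X) \subseteq X^X$ to be locally constant exactly as required.
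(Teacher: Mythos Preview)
Your proof is correct and follows a genuinely different route from the paper. The paper derives the finite case from a more general result (its Theorem~3.3) about posets in which every element has finite height and every down-equivalence class is countable: the argument proceeds by induction on height, showing for each point $a$ that the sets $\{t\in I : H(a,t)=b\}$ form a countable partition of $I$ into closed sets, and then invokes Sierpinski's theorem to conclude that only one of them is nonempty. Your argument instead exploits finiteness directly---first via the order lemma (using that $h=g^{-1}f$ has finite order in the finite group $\Homeo(X)$), and then via the identification of the product topology on the finite set $X^X$ with an Alexandrov topology. The payoff of your approach is a short, self-contained proof for finite $X$ that avoids Sierpinski's theorem entirely; the payoff of the paper's approach is that it extends without change to certain infinite posets (yielding, for instance, Corollary~1.5). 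Your order lemma in fact survives under the finite-height hypothesis alone, since an automorphism preserves height and $h(x)<x$ would force $\Ht(h(x))<\Ht(x)=\Ht(h(x))$; it is the function-space step that genuinely requires $X$ to be finite, because for infinite $X$ the product topology on $X^X$ is no longer Alexandrov. One cosmetic remark: the paper declares \emph{lower} sets to be open, so under its convention the minimal open neighborhood of $\phi\in X^X$ is $\{\psi:\psi\leq\phi\text{ pointwise}\}$ rather than the up-set you wrote, but this is harmless since your order lemma is symmetric in $f$ and $g$.
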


We then use a result of Barmak\cite{Barmak2009} to show
\begin{corollary}
Let $G$ be a finite group and let $X$ be a finite space.  Then there exists a finite space $Y$ such that $X$ and $Y$ are (strong) homotopy-equivalent and such that $\Mod(Y)\cong G$.
\end{corollary}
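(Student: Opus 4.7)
The approach is to combine Theorem 2 with the realization theorem of Barmak cited as \cite{Barmak2009}. The key observation is that Theorem 2 already does most of the work: for a finite $T_0$ space $Y$ it reduces the computation of $\Mod(Y)$ to that of $\Homeo(Y)$. Thus the corollary is equivalent to the statement that every finite group can be realized as $\Homeo(Y)$ for some finite $T_0$ space $Y$ lying in the prescribed (strong) homotopy equivalence class of $X$, and this is exactly (or very close to) Barmak's theorem.

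First, I would replace $X$ by its $T_0$ quotient $T_0(X)$ so as to work with posets. By Theorem 1 this does not alter the mapping class group, and the canonical quotient $X \to T_0(X)$ is a homotopy equivalence compatible with the notion of (strong) homotopy equivalence used here, so one may assume $X$ is itself $T_0$. Then I would invoke Barmak's construction, which, given a finite group $G$ and a finite $T_0$ space $X$, produces a finite $T_0$ space $Y$ with the same core as $X$ (hence strong homotopy equivalent to $X$) and with automorphism group isomorphic to $G$; the standard strategy is to attach additional beat points arranged according to a free $G$-action, which adjusts the symmetries without changing the core. Since $Y$ is $T_0$, poset automorphisms of $Y$ coincide with self-homeomorphisms, so $\Homeo(Y) \cong G$.

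Finally, Theorem 2 applied to the finite $T_0$ space $Y$ gives $\Mod(Y) = \Homeo(Y) \cong G$, which is the desired conclusion. The main obstacle I expect is not really in the deduction but in correctly matching hypotheses: one must check that Barmak's realization can be performed inside a prescribed strong homotopy equivalence class rather than merely producing some finite $T_0$ space with $\Homeo \cong G$, and that ``strong homotopy equivalence'' as used here is the same notion Barmak preserves (i.e., shared core under beat-point reduction). Once these bookkeeping points are in place, the corollary follows formally from Theorems 1 and 2.
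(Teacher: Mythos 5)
Your proposal is correct and matches the paper's (essentially unwritten) argument: the paper simply combines Barmak's realization theorem---which produces a finite $T_0$ space $Y$ in the homotopy type of $X$ with $\Homeo(Y)\cong G$---with Theorem 1.2 to get $\Mod(Y)=\Homeo(Y)\cong G$. Your additional care about passing to $T_0(X)$ and about matching the notion of (strong) homotopy equivalence to Barmak's construction is reasonable bookkeeping but does not change the route.
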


Combining this result with a classical theorem of McCord\cite{McCord1966}, we obtain another corollary.

\begin{corollary}
Let $G$ be a finite group and let $X$ be a finite simplicial complex.  Then there exists a finite space $Y$ which is weak equivalent to $X$ such that $\Mod(Y)\cong G$.
\end{corollary}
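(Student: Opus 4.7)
The plan is to chain together McCord's theorem with the immediately preceding corollary, which is the substantive content of the paper. Concretely, given a finite simplicial complex $X$, I would first pass to the finite $T_0$ space associated to $X$ by McCord's construction, and then apply the previous corollary to that space.

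In more detail, let $\mathcal{X}(X)$ denote the face poset of $X$ equipped with the Alexandrov topology; this is a finite $T_0$ space. McCord's theorem provides a weak homotopy equivalence $|X|\to \mathcal{X}(X)$ induced by sending each point of the geometric realization to the minimal open set containing it (equivalently, to the open simplex whose closure contains it). Thus $\mathcal{X}(X)$ is weak equivalent to $X$.

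Next, I would apply the preceding corollary to the finite space $\mathcal{X}(X)$ to obtain a finite space $Y$ that is strong homotopy equivalent to $\mathcal{X}(X)$ and satisfies $\Mod(Y)\cong G$. Since strong homotopy equivalence implies ordinary homotopy equivalence, and ordinary homotopy equivalence implies weak homotopy equivalence, $Y$ and $\mathcal{X}(X)$ are weakly equivalent. Composing with the McCord equivalence gives a zig-zag of weak equivalences between $Y$ and $X$, as required.

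There is no real obstacle in this argument; the nontrivial work has already been done in establishing the previous corollary (which in turn rests on Barmak's construction and the isomorphism $\Mod(Y)\cong\Homeo(Y)$ for finite $T_0$ spaces). The only point that merits a brief remark is the verification that strong homotopy equivalence in the sense of the previous corollary implies weak homotopy equivalence between the underlying topological spaces, so that the McCord equivalence can be composed with it. This follows because a strong homotopy equivalence of finite spaces is, in particular, a homotopy equivalence of topological spaces, which induces isomorphisms on all homotopy groups.
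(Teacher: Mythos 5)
Your proposal is correct and is exactly the argument the paper intends: the text introduces this corollary with ``Combining this result with a classical theorem of McCord,'' i.e., pass to the face poset via McCord's weak equivalence and then apply the preceding corollary to that finite $T_0$ space. Your remark that strong homotopy equivalence yields weak equivalence, so the zig-zag composes, is the only detail needing mention, and you handle it correctly.
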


  A finite topological space is $T_0$ if and only if it is the order topology of a finite partially ordered set.  Our proof of Theorem 1.2 actually works for a wider class of posets which includes finite posets.  By suitably weakening the hypothesis of Theorem 1.2, we obtain the following corollary.

\begin{corollary}
There exists a countably infinite, contractible poset $X$ such that $\Mod(X)=\Homeo(X)\cong S_{\mathbb{N}}$.
\end{corollary}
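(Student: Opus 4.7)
The plan is to exhibit an explicit example: let $X = \{\hat{0}\} \sqcup \mathbb{N}$, with partial order $\hat{0} < n$ for every $n \in \mathbb{N}$ and the elements of $\mathbb{N}$ pairwise incomparable. Endow $X$ with its Alexandrov topology (open sets are up-sets); concretely, the open sets are $X$ itself together with the arbitrary subsets of $\mathbb{N}$. Since $X$ has only two levels and very simple local structure, it should lie in whatever wider class of posets the author has in mind when generalizing Theorem 1.2.

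To see that $X$ is contractible, I would define $H \colon X \times I \to X$ by $H(x,t) = x$ for $t < 1$ and $H(x,1) = \hat{0}$. For an open $U \subseteq X$ with $\hat{0} \notin U$, the preimage $H^{-1}(U)$ is $U \times [0,1)$; and if $\hat{0} \in U$ then necessarily $U = X$, so $H^{-1}(U) = X \times I$. Both are open, so $H$ is a continuous homotopy from $\mathrm{id}_X$ to the constant map at $\hat{0}$.

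To compute $\Homeo(X)$, observe that $\hat{0}$ is the unique non-maximal element of $X$, so every homeomorphism must fix $\hat{0}$ and restrict to a bijection of the antichain $\mathbb{N}$. Conversely, any permutation of $\mathbb{N}$ extended by the identity at $\hat{0}$ is an order automorphism, hence a homeomorphism of the Alexandrov space. This yields $\Homeo(X) \cong S_{\mathbb{N}}$.

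Finally, invoking the generalized form of Theorem 1.2 gives $\Mod(X) = \Homeo(X) \cong S_{\mathbb{N}}$. The main point requiring care is precisely this last invocation: one must check that the rigidity argument used for Theorem 1.2 in the finite case applies to $X$. The essential structural feature is that each point has a minimal open neighborhood (namely $X$ at $\hat{0}$ and $\{n\}$ at each $n \in \mathbb{N}$), which is what the finite-space argument exploits when turning an isotopy into a path of permutations and forcing it to be constant; since this property clearly survives here, the proof should carry over without modification, which is what establishes the corollary.
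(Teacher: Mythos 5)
Your example is, up to the up-set/down-set convention, exactly the one the paper uses: $\mathbb{N}$ together with one extra comparable point adjoined below (the paper) or, dually, $\hat{0}$ adjoined below an antichain with up-sets declared open (yours). The verification that $\Homeo(X)\cong S_{\mathbb{N}}$ and the explicit contraction to $\hat{0}$ are both correct, and the latter is a detail the paper actually omits. The one place your argument is off is the final step, where you guess what the ``wider class of posets'' is. The hypothesis you propose --- that every point has a minimal open neighborhood --- holds for \emph{every} poset with the order/Alexandrov topology, so it cannot be the operative condition; if it were, the rigidity statement would hold for all posets with no restriction. The actual hypotheses of the paper's Theorem 3.3 are that every element has finite height and that every down-equivalence class is countable. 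The countability is essential: the proof partitions $I$ into the closed sets $X_b=\{t: H(a,t)=b\}$ indexed by the down-equivalence class of $a$ and applies Sierpinski's theorem, which requires the partition to be countable (the unit interval \emph{can} be partitioned into uncountably many nonempty closed sets). Your $X$ satisfies both hypotheses trivially (heights are $0$ and $1$, and the whole space is countable), so the corollary does go through, but the justification you give for invoking the theorem is not the right one and would not survive in a setting where the down-equivalence classes are uncountable.
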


\section{Spaces which are not $T_0$}
The goal of this section is to prove Theorem 1.1.  First, we will need the following lemma.

\begin{lemma}
Let $X$ be an arbitrary topological space, and let $U\subseteq X$ be an indiscrete subset.  Let $f:X\to X$ be a (not necessarily continuous) bijection such that $f_{|X\backslash U}$ is the identity.  Then
\begin{enumerate}
\item
The map $f$ is a homeomorphism.
\item
The homeomorphisms $f$ and $\mathrm{Id}_X$ are isotopic.  Moreover, they are isotopic relative to $X\backslash U$.
\end{enumerate}
\end{lemma}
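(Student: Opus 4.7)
The plan is to reduce both parts to a single observation: for every open $V \subseteq X$, one has $f^{-1}(V) = V$. Since $U$ carries the indiscrete subspace topology, every open $V \subseteq X$ satisfies $V \cap U = \emptyset$ or $U \subseteq V$, because $V \cap U$ must be open in the indiscrete topology on $U$. Because $f$ is a bijection fixing $X \setminus U$ pointwise, it restricts to a bijection $U \to U$. If $V \cap U = \emptyset$, then $V \subseteq X \setminus U$ where $f$ is the identity, so $f^{-1}(V) = V$. If $U \subseteq V$, split $V = U \cup (V \setminus U)$ with $V \setminus U \subseteq X \setminus U$; then $f^{-1}(U) = U$ and $f^{-1}(V \setminus U) = V \setminus U$, so again $f^{-1}(V) = V$. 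Part (1) follows immediately: $f$ pulls every open set back to itself, and applying the same reasoning to the bijection $f^{-1}$ (which also fixes $X \setminus U$) shows $f^{-1}$ is continuous, so $f$ is a homeomorphism.

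For part (2), I would exhibit the explicit isotopy
\[
H(x,t) \;=\; \begin{cases} x & \text{if } t < 1, \\ f(x) & \text{if } t = 1, \end{cases}
\]
from $\mathrm{Id}_X$ to $f$. Each slice $H_t$ is either $\mathrm{Id}_X$ or the homeomorphism $f$, and $H(x,t) = x$ for all $t$ whenever $x \in X \setminus U$, so this will be an isotopy relative to $X \setminus U$ provided $H$ is continuous as a map $X \times [0,1] \to X$. To verify continuity, for any open $V \subseteq X$ compute
\[
H^{-1}(V) \;=\; \bigl(V \times [0,1)\bigr) \cup \bigl(f^{-1}(V) \times \{1\}\bigr) \;=\; V \times [0,1],
\]
using $f^{-1}(V) = V$ from above; this is manifestly open in $X \times [0,1]$.

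The whole argument rests on the single equality $f^{-1}(V) = V$, and that is where I expect the main conceptual work: recognizing that indiscreteness of $U$ is strong enough to force this rigid condition on arbitrary homeomorphisms supported in $U$. Once it is in hand, both the continuity of $f$ and the continuity of the otherwise implausible-looking isotopy (with its "instantaneous jump" at $t=1$) fall out immediately, and no further analytic machinery is needed.
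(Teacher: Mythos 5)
Your proposal is correct and follows essentially the same route as the paper: establish $f^{-1}(V)=V$ for every open $V$ by splitting into the cases $V\cap U=\emptyset$ and $U\subseteq V$, then use the same ``jump at $t=1$'' isotopy and verify $H^{-1}(V)=V\times[0,1]$. Your explicit observation that $f$ restricts to a bijection of $U$, and your note that $H$ is automatically an isotopy rel $X\setminus U$, are minor clarifications of steps the paper leaves implicit.
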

\begin{proof}
First we must show that $f$ is continuous.  Let $V\subseteq X$ be an open set.  If $U\cap V=\emptyset$, then $f^{-1}(V)=V$ is also open.  On the other hand, if $U\cap V$ is nonempty, then $U\subset V$ because $U$ is a set of topologically indistinguishable points.  Since $f^{-1}(U)=U$, then $f^{-1}(V)=V$.  Hence, $f$ is continuous.  A symmetric argument shows that $f^{-1}$ is continuous, hence $f$ is a homeomorphism.  
\par 
Now we define a map $H: X\times I\to X$ given by
\[
H(x,t)=\begin{cases}
x & 0\leq t<1 \\
f(x) & t=1
\end{cases}
\]

We claim that $H$ is an isotopy.  It is clear from the previous paragraph that $h_t=H(*,t)$ is a homeomorphism for all $t\in I$.  Thus, we only need to show that $H$ is continuous.  For an open set $V\subseteq X$, the argument in the previous paragraph shows that $f^{-1}(V)=V$.  Hence, $H^{-1}(V)=V\times I$, which is open in $X\times I$, so $H$ is continuous.
\end{proof}
Now we are ready to prove Theorem 1.1.  
\begin{proof}
Suppose that $f,g\in \Homeo(X)$ such that $f(x)$ and $g(x)$ are topologically indistinguishable for all $x\in X$.  We want to show that $f$ and $g$ are isotopic.  By composing each homeomorphism with $g^{-1}$, we may assume that $g=\mathrm{Id}_X$.  Hence, $x$ and $f(x)$ are topologically indistinguishable for all $x\in X$.  Applying Lemma 2.1 to each indiscrete subset of $X$ yields and isotopy between $f$ and the identity.
\end{proof}

\section{Mapping Class Groups of Posets}
Let $P$ be a partially ordered set.  Recall that a \emph{lower set} of $P$ is a set $S\subseteq P$ such that, whenever $a\leq b$ and $b\in S$, then $a\in S$.  Then $P$ can be given a topology, called the \emph{order topology}, by declaring the lower sets to be open.  Conversely, it is well known (see, for example \cite{BarmakFiniteSpacesBook}) that a space $X$ is homeomorphic to a poset with the order topology if and only if $X$ is $T_0$ and if arbitrary intersections of open sets in $X$ are open.  A map between posets is order preserving if and only if it is continuous in the order topology. 
\par 
Any finite $T_0$ space clearly satisfies the above conditions, and this fact gives an equivalence of categories between finite $T_0$ spaces and finite partially ordered sets.  Thus, Theorem 1.2 is really a statement about finite posets.
\par 
Before proving Theorem 1.2, we need to define two concepts.

\begin{definition}\rm
Let $P$ be a poset.  A \emph{chain of length $n$} is a sequence of distinct points $a_0<a_1<\cdots <a_{n-1}<a_n$ in $P$.   The \emph{height} of an element $a\in P$ is the supremum of the lengths of chains whose maximum are $a$:
\[\Ht(a)=\sup\{n\in \mathbb{N}\cup \{\infty\}: \exists a_0<\cdots <a_n=a\}\]
\end{definition}
Note that if $a<b$ and $b$ has finite height, then $\Ht(a)<\Ht(b)$. Also note that if $f: P\to P'$ is a homeomorphism, then $\Ht(f(a))=\Ht(a)$ for all $a\in P$.

\begin{definition}\rm
Let $P$ be a poset, and let $a,a'\in P$.  Then $a$ and $a'$ are said to be \emph{down-equivalent} if for all $b\in P$, $b<a$ if and only if $b<a'$.  
\end{definition}
Note that down-equivalence is an equivalence relation.  Also note that if $f: P\to P'$ is a homeomorphism, then $a$ is down-equivalent to $a'$ if and only if $f(a)$ is down-equivalent to $f(a')$.  
\par 
We are now ready to state the main result of this section.

\begin{theorem}
Let $P$ be a poset such that
\begin{itemize}
\item
Each element of $P$ has finite height,
\item
Each down-equivalence class of $P$ is countable.
\end{itemize}
Then any two elements of $\Homeo(P)$ are isotopic if and only if they are equal.
\end{theorem}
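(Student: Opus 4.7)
My plan is to analyze the paths traced out by the isotopy at each point. The ``if'' direction is immediate via the constant isotopy. For ``only if'', suppose $f, g \in \Homeo(P)$ are isotopic via some $H : P \times I \to P$ with each $h_t := H(\cdot, t)$ a homeomorphism. Post-composing with $g^{-1}$ yields an isotopy from $g^{-1} \circ f$ to $\mathrm{Id}_P$, so it suffices to show that any homeomorphism isotopic to the identity is itself the identity.

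Assume then that $H$ is an isotopy from $\mathrm{Id}_P$ to $f$, fix $a \in P$, and study the continuous path $\gamma_a(t) := H(a, t)$ from $a$ to $f(a)$. Since each $h_t$ is a homeomorphism and therefore preserves height (noted after Definition 3.1), $\gamma_a$ lies entirely in the level set $L_n := \{b \in P : \Ht(b) = n\}$, where $n = \Ht(a)$. The central step is to show any continuous path landing in $L_n$ is constant. For this I would use the following observation: for every $b \in L_n$, the minimal open set $U_b = \{c \in P : c \leq b\}$ meets $L_n$ only at $b$, because any $c < b$ satisfies $\Ht(c) < \Ht(b) = n$ by the finite-height hypothesis. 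Continuity of $\gamma_a$ then gives $\gamma_a^{-1}(\{b\}) = \gamma_a^{-1}(U_b)$, which is open in $I$, for every $b$ in the image of $\gamma_a$. These preimages form a partition of $I$ by disjoint nonempty open sets, so connectedness of $I$ forces the image to be a single point; hence $f(a) = \gamma_a(1) = \gamma_a(0) = a$.

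The main point to be careful about is the identification $\gamma_a^{-1}(U_b) = \gamma_a^{-1}(\{b\})$, since this is where the finite-height hypothesis enters in an essential way: without it a strict predecessor $c < b$ could share $b$'s height and the preimage could strictly contain $\{b\}$, breaking the cover-by-clopens argument. Once this identification is secured, the remainder is a short connectedness argument applied uniformly at every $a \in P$ to conclude $f = \mathrm{Id}_P$.
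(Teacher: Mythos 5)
Your proof is correct, and it takes a genuinely different and in fact more economical route than the paper. The paper argues by strong induction on height: the base case observes that the set of height-$0$ points is discrete, and the inductive step shows that $H_t(a)$ stays in the down-equivalence class $A$ of $a$, partitions $I$ into the \emph{closed} fibers $X_b=\{t: H(a,t)=b\}$ for $b\in A$, and invokes Sierpi\'nski's theorem that $I$ is not a disjoint union of countably many (and more than one) nonempty closed sets --- this is exactly where the countability hypothesis on down-equivalence classes is consumed. You instead note that each level set $L_n=\{b:\Ht(b)=n\}$ is an antichain (two comparable elements of finite height have different heights), so the minimal open set $U_b$ meets $L_n$ only in $b$ and $L_n$ is discrete in the subspace topology; since each $H_t$ preserves height, the track $\gamma_a$ lives in $L_{\Ht(a)}$, and its fibers $\gamma_a^{-1}(\{b\})=\gamma_a^{-1}(U_b)$ are \emph{open}, so ordinary connectedness of $I$ forces $\gamma_a$ to be constant. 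This eliminates the induction, replaces Sierpi\'nski's theorem with the definition of connectedness, and --- worth making explicit --- never uses the countability hypothesis at all, so your argument proves the stronger statement that the conclusion holds for every poset in which each element has finite height. The one step to keep visible is the identification $\gamma_a^{-1}(U_b)=\gamma_a^{-1}(\{b\})$ for $b$ in the image of $\gamma_a$, which you correctly isolate as the place where finite height is essential.
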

A finit poset clearly satisfies both conditions, and so Theorem 1.2 follows as an immediate corollary of Theorem 3.3.

\begin{proof}
Suppose $f,g\in \Homeo(P)$ and let $H:P\times I\to P$ be an isotopy between $f$ and $g$.  By composing both homeomorphisms with $g^{-1}$, we obtain an isotopy between $f\circ g^{-1}$ and the identity, thus we may assume without loss of generality that $g=\mathrm{Id}_P$.
\par 
For $0\leq t\leq 1$, let $H_t\in \Homeo(P)$ be the map given by $H_t(p)=H(p,t)$.  We will prove that $H_t$ fixes every element of $P$ by using strong induction on height, and hence in particular that $H_0=f$ fixes every element of $P$.  We can use induction because each point has finite height.  First, consider the subspace $P_0\subseteq P$ of points of height zero.  It is evident from the definitions that $P_0$ carries the discrete topology.  Since homeomorphisms preserve height, any element of $\Homeo(P)$ restricts to an element of $\Homeo(P_0)$.  Thus, the restriction of $H$ to $P_0\times I$ gives an isotopy between $f\vert_{P_0}$ and $\mathrm{Id}_{P_0}$.  But $P_0$ is discrete, so the only homeomorphism of $P_0$ isotopic to the identity is the identity itself.  Thus $H_t\vert_{P_0}=\mathrm{Id}_{P_0}$.
\par 
Now suppose that for all $t\in I$ and for all $p\in P$ such that $\Ht(p)<n$, we have $H_t(p)=p$. Consider a point $a$ of height $n$.  For $0\leq t\leq 1$, $H_t$ is a homeomorphism of $P$.  Thus, for any $b\in P$, we have $b<a$ if and only if $H_t(b)<H_t(a)$.  But by the induction hypothesis, $H_t(b)=b$ whenever $b<a$.  Thus $b<a$ implies $b<H_t(a)$.  Moreover, $H_t^{-1}$ is also a homeomorphism of $P$ which is isotopic to the identity, so $b<H_t(a)$ implies $b<a$.  In other words, $a$ and $H_t(a)$ are down-equivalent.
\par 
Now let $A$ be the down-equivalence class of $a$.  For $b\in A$, define 
\[X_b=\{t\in I: H(a,t)=b\}\]
We will show that $X_b$ is closed in $\mathbb{R}$.  Suppose $t_1,t_2,\ldots$ is a Cauchy sequence in $X_b$ and that $\lim_{m\to \infty} t_m=t$.  We want to show that $t\in X_b$.  Since $H$ is continuous, and continuity implies sequential continuity, $H$ is also sequentially continuous.  Hence, $H(a,t)$ must be a limit of the sequence $H(a,t_1),H(a,t_2),\ldots$.  But this sequence is the constant sequence $b,b,b,b,\ldots$.  However, we are not quite ready to conclude that $H(a,t)=b$, because $P$ is non-Hausdorff and so limits are not unique.  
\par 
It is immediate from the definition of convergence that a point $c\in P$ is a limit of the constant sequence $\{b\}_{m\geq 1}$ if and only if $c\leq b$.  Thus, $H(a,t)\leq b$.  If $H(a,t)<b$, then by the definition of down-equivalence we would have $H(a,t)<a$, which is impossible since $H_t$ is an order isomorphism.  Thus $H(a,t)=b$, so $t\in X_b$ and hence $X_b$ is closed.
\par 
By construction, $I$ can be written as the disjoint union
\[I=\bigsqcup_{b\in A} X_b\]
Moreoever, by the previous two paragraphs, each $X_b$ is closed.  Also recall that $A$ is countable by hypothesis.  Sierpinski's Theorem states that if $I$ can be written as a disjoint, countable union of closed sets, then exactly one of the closed sets is nonempty.  Thus, $H(a,t)$ does not depend on $t$.  Since $H(a,0)=a$ by hypothesis, we can conclude that $H(a,t)=a$ for all $t\in I$.   
\end{proof}

Now we are ready to prove Corollary 1.5.

\begin{proof}
Let $X$ be the set $\mathbb{N}\cup \{x\}$ with the partial order $a\leq b$ if and only if $a=x$.  Then the point $x$ has height 0, and all other points in $X$ have height 1.  $X$ is also countable.  Thus, this poset satisfies the hypotheses of Theorem 3.3, and thus $\Aut(X)\cong \Mod(X)$.  It is also clear from the construction that $\Aut(X)$ is isomorphic to the symmetric group on a countably infinite set.
\end{proof}
\printbibliography
\end{document}